\definecolor{webgreen}{rgb}{0,.5,0}
\definecolor{webbrown}{rgb}{.6,0,0}
\newcommand{\seqnum}[1]{\href{http://www.research.att.com/cgi-bin/access.cgi/as/~njas/sequences/eisA.cgi?Anum=#1}{\underline{#1}}}
\begin{document}


\begin{center}
\vskip 1cm{\LARGE\bf Hadamard Products and Tilings } \vskip 1cm
\large Jong Hyun Kim \\
Department of Mathematics\\
Brandeis University\\
Waltham,  MA 02454-9110 \\
USA \\
\href{mailto:jhkim@brandeis.edu}{\tt jhkim@brandeis.edu} \\
\end{center}

\vskip .2 in
\begin{abstract}
Shapiro gave a combinatorial proof of a bilinear generating function for Chebyshev
    polynomials equivalent to the formula
\[ \frac{1}{1-ax-x^2}\ast \frac{1}{1-bx-x^2}
                =  \frac{1-x^2}{1-abx-(2+a^2+b^2)x^2
                -abx^3+x^4},  \]
    where $*$ denotes the Hadamard product. In a similar way, by considering tilings of a $2\times n$ rectangle
    with $1\times1$ and $1\times 2$ bricks in the top row, and $1\times1$ and $1\times n$ bricks in the bottom row,
    we find an explicit formula for the
    Hadamard product
    \[\frac{1}{1-ax-x^2}\ast \frac{x^m}{1-bx-x^n}.\]
\end{abstract}

\newtheorem{theorem}{Theorem}[section]
\newtheorem{lemma}{Lemma}[section]

\catcode `!=11

\newdimen\squaresize
\newdimen\thickness
\newdimen\Thickness
\newdimen\ll! \newdimen \uu! \newdimen\dd! \newdimen \rr! \newdimen \temp!

\def\sq!#1#2#3#4#5{%
\ll!=#1 \uu!=#2 \dd!=#3 \rr!=#4
\setbox0=\hbox{%
 \temp!=\squaresize\advance\temp! by .5\uu!
 \rlap{\kern -.5\ll!
 \vbox{\hrule height \temp! width#1 depth .5\dd!}}%
%
 \temp!=\squaresize\advance\temp! by -.5\uu!
 \rlap{\raise\temp!
 \vbox{\hrule height #2 width \squaresize}}%
%
 \rlap{\raise -.5\dd!
 \vbox{\hrule height #3 width \squaresize}}%
%
 \temp!=\squaresize\advance\temp! by .5\uu!
 \rlap{\kern \squaresize \kern-.5\rr!
 \vbox{\hrule height \temp! width#4 depth .5\dd!}}%
%
 \rlap{\kern .5\squaresize\raise .5\squaresize
 \vbox to 0pt{\vss\hbox to 0pt{\hss $#5$\hss}\vss}}%
}
 \ht0=0pt \dp0=0pt \box0
}

\def\vsq!#1#2#3#4#5\endvsq!{\vbox to \squaresize{\hrule width\squaresize height 0pt%
\vss\sq!{#1}{#2}{#3}{#4}{#5}}}

\newdimen \LL! \newdimen \UU! \newdimen \DD! \newdimen \RR!

\def\vvsq!{\futurelet\next\vvvsq!}
\def\vvvsq!{\relax
  \ifx     \next l\LL!=\Thickness \let\continue!=\skipnexttoken!
  \else\ifx\next u\UU!=\Thickness \let\continue!=\skipnexttoken!
  \else\ifx\next d\DD!=\Thickness \let\continue!=\skipnexttoken!
  \else\ifx\next r\RR!=\Thickness \let\continue!=\skipnexttoken!
  \else\ifx\next P\let\continue!=\place!
  \else\def\continue!{\vsq!\LL!\UU!\DD!\RR!}%
  \fi\fi\fi\fi\fi
  \continue!}

\def\skipnexttoken!#1{\vvsq!}

\def\place! P#1#2#3{%
\rlap{\kern.5\squaresize\temp!=.5\squaresize\kern#1\temp!
  \temp!=\squaresize \advance\temp! by #2\squaresize \temp!=.5\temp!
  \raise\temp!\vbox to 0pt{\vss\hbox to 0pt{\hss$#3$\hss}\vss}}\vvsq!}

\def\Young#1{\LL!=\thickness \UU!=\thickness \DD! = \thickness \RR! = \thickness
\vbox{\smallskip\offinterlineskip \halign{&\vvsq! ## \endvsq!\cr
#1}}}

\def\blank{\omit\hskip\squaresize}
\catcode `!=12

\section{Introduction}
The Fibonacci numbers (\seqnum{A000045}) are defined by $F_0=0$,
$F_1=1$, and for $n\geq2$, $F_n=F_{n-1}+F_{n-2}$. It is convenient
to write $f_n$ for $F_{n+1}$ so that $f_n$ is the number of ways
to tile a $1\times n$ strip with $1\times 1$ square bricks and
$1\times 2$ rectangular bricks \cite{benjamin-quinn}. The number
of tilings of a $1\times n$ strip with $k$ bricks is the
coefficient of $x^n$ in $(x+x^2)^k$, so we know that the
generating function \cite{fosta-han} for Fibonacci numbers is
    \begin{equation} \notag
        \sum_{n=0}^\infty f_n x^n = \frac{1}{1-x-x^2}.
    \end{equation}
We now define the polynomial $f_n(a)$  by
        \begin{equation} \label{eq:11}
            \frac{1}{1-ax-x^2}=\sum_{n=0}^{\infty}f_n(a)x^n.
        \end{equation}
Then we have that $f_n(1)=f_n$ and $f_n(a)$ can be interpreted as
the sum of the weights of tilings of a $1\times n$ strip with
$1\times1$ square bricks weighted by $a$ and $1\times2$
rectangular bricks weighted by $1$. By applying the geometric
series and binomial series to $(1-ax-x^2)^{-1}$ we obtain that
(\seqnum{A011973})
        \begin{equation} \label{eq:12}
        f_n(a)=\sum_{k=0}^{\left\lfloor \frac{n}{2}
        \right\rfloor}{\binom{n-k}{k}}a^{n-2k}.
                        \end{equation}
So, $f_0(a)=1$, $f_1(a)=a$, $f_2(a)=1+a^2$, $f_3(a)=2a+a^3$,
$f_4(a)=1+3a^2+a^4$,
etc.\\

        Louis W. Shapiro \cite{shapiro} gave a combinatorial proof of a bilinear generating function for Chebyshev
    polynomials equivalent to
        \begin{equation} \label{eq:13}
        \sum_{n=0}^\infty f_n(a) f_n(b)x^{2n}=\frac{1-x^4}{1-abx^2-(2+a^2+b^2)x^4-abx^6+x^8} .
                        \end{equation}

The Hadamard product $G\ast H$ of the power series
$G(x)=\sum_{k\geq0} g(k)x^k$ and $H(x)=\sum_{k\geq 0}h(k)x^k$ is
defined by
\[ G\ast H=\sum_{k\geq 0}g(k)h(k)x^k.\]
If $G(x)$ and $H(x)$ are rational power series, then so is the
Hadamard product $G\ast H$ \cite[p.~207]{EC1}.

Using the notation of Hadamard product, we can rewrite
(\ref{eq:13}) as
        \begin{equation} \label{eq:14}
        \frac{1}{1-ax-x^2}\ast \frac{1}{1-bx-x^2}
                =  \frac{1-x^2}{1-abx-(2+a^2+b^2)x^2
                -abx^3+x^4}.
                        \end{equation}
In this paper, I will review Shapiro's proof of (\ref{eq:14}), and
extend this approach to find an explicit formula for the Hadamard
product
    \[\frac{1}{1-ax-x^2}\ast \frac{x^m}{1-bx-x^n}.\]

The MacMahon operator $\Omega_{\geq}$ is defined on formal Laurent series
by
        \begin{equation} \label{eq:014}
         \Omega_{\geq} \sum_{n = -\infty }^{\infty}  a_n x^n 
         =  \sum_{n = 0}^{\infty}  a_n  . 
                               \end{equation}
We assume that the above sum (\ref{eq:014}) converges in an appropriate sense.

We can express the MacMahon operator $\Omega_{\geq}$ in terms of
Hadamard products. Let $G(x)=\sum_{i \geq0} g(i) x^i$ and
$H(x)=\sum_{j \geq 0}h(j) x^j$ be power series. Then
\[ G(x)H(x^{-1})=\sum_{i \geq 0} \sum_{j \geq
    0 }g(i)h(j) x^{i- j} . \]
Since $ \sum_{i\geq 0}(\sum_{j \leq i}h(j))x^i = H( x)/( 1-x)$, we
have that
        \begin{equation} \label{eq:15}
    \Omega_{\geq} G(x)H(x^{-1})=\sum_{i =0}^\infty \sum_{j =0
    }^i g(i)h(j) = \Big( G(x)\ast \frac{H( x )}{1-
    x} \Big)
    \Big|_{x = 1}.
                        \end{equation}
In \cite{Han}, G.-N. Han used computer algebra to show that
        \begin{equation*}
         \Omega_{\geq} \frac{1}{(1-zx -zx^2)(1-y/ x -y/ x^2)}
         =\frac{1+z^2y}{(1-2z)(1-3zy-z^2y-zy^2)}.
         \end{equation*}
We will derive Han's result from our formula for $1/(1-ax-x^2) *
1/(1-bx-x^3)$.

\section{Hadamard products}
Now we review Shapiro's \cite{shapiro} proof of a formula for the
Hadamard product
        \begin{equation} \label{eq:26}
            \frac{1}{1-ax-x^2}\ast
            \frac{1}{1-bx-x^2}=\sum_{k=0}^{\infty}f_k(a)f_k(b)x^k.
        \end{equation}
We consider (\ref{eq:26}) as counting pairs of tilings. The
coefficient $f_k(a)f_k(b)$ of $x^k$ counts tilings of a $2\times
k$ rectangle with $1\times1$ square bricks weighted by $a$ and
$1\times2$ rectangular bricks weighted by $1$ in the top row, and
$1\times1$ square bricks weighted by $b$ and $1\times 2$
rectangular bricks weighted by $1$ in the bottom row, as in the
following figure:
        \squaresize = 16pt \thickness = 1pt \Thickness = 0pt
        \[\Young{ a & r & l& a & r & l & a & a & a & r & l & r & l & r & l &r &lr & a & a   \cr
        b& b &b & r &  l & b &lr & lr & r & l & r & l & b & b & r  & l
        &r &l  & b \cr}\]
The vertical line segments passing from top to bottom serve to
factor these tilings into tilings of smaller length. For example,
the following figure shows the factorization of the above figure.
    \squaresize = 14pt \thickness = 1pt \Thickness = 0pt
        \[\Young{ a & ud & r & l& ud & a & r & l & ud & a & a & ud & a & r & l & r & l & ud & r & l &r  &l & a & ud & a   \cr
                 b&ud & b &b & ud & r &  l & b & ud & lr & lr & ud & r & l & r & l & b & ud & b& r  & l
        &r &l  & ud & b \cr}\]
Let's define a prime block to be a tiling that cannot be factored
any further without cutting it through the middle of some brick.
So these prime blocks can be classified as follows:

\bigskip
\noindent The prime block of length $1$:
        \squaresize = 16pt \thickness = 1pt \Thickness = 0pt
        \[\Young{    \cr
        \cr}\]
The prime blocks of length $2$:
        \[\Young{ r&l  & udrl & udrl & &  & udrl & udrl & r & l \cr
        r&l & udrl , & udrl  &  r & l  & udrl ,& udrl &  &  \cr}\]
The prime blocks of length $2k+1\geq 3$, together with the result
of interchanging the two rows:
        \[\Young{ r&l & r&l &  udr & udrl \cdots & lud & r&l & \cr
        r& r&l & r & l & udr &  rudl \cdots & lud & r & l \cr}\]
The prime blocks of length $2k\geq 4$, together with the result of
interchanging the two rows:
        \[\Young{ r&l & r&l &  udr & udrl \cdots & lud & r&l \cr
        r& r&l & r & l & udr &  rudl \cdots & lud & \cr}\]
Thus the generating function $P_2(x)$ for the weighted prime
blocks of the Hadamard product is
        \begin{align}
        P_2(x) &= abx+(1+a^2+b^2)x^2 +\sum_{k = 1}^{\infty}
        2ab x^{2k+1}+\sum_{k = 2}^{\infty}
        (a^2+b^2)x^{2k} \nonumber \\
        &= abx+(1+a^2+b^2)x^2+\frac{2ab
        x^3}{1-x^2}+\frac{(a^2+b^2)x^4}{1-x^2} \nonumber\\
        &=
        \frac{abx+(1+a^2+b^2)x^2+abx^3-x^4}{1-x^2}.
        \nonumber
        \end{align}
Since any tiling can be factored uniquely as a sequence of prime
blocks \cite[p.~1027--1030]{HC2}, we have $(1-ax-x^2)^{-1} \ast
(1-bx-x^2)^{-1}=1/(1-P_2(x))$. So we obtain the following explicit
formula:
        \begin{equation}\label{eq:27}
                \frac{1}{1-ax-x^2}\ast \frac{1}{1-bx-x^2}
                =  \frac{1-x^2}{1-abx-(2+a^2+b^2)x^2
                -abx^3+x^4},
        \end{equation}
        which is equivalent to Shapiro's result.
Letting $a=1$ and $b=1$ in equation (\ref{eq:27}), we
have \cite[p.~251]{EC1} (\seqnum{A007598})
   \begin{equation*}
        \sum_{n=0}^\infty f_n^2 x^{n} =
         \frac{1-x}{1-2x-2x^2+x^3}.
    \end{equation*}

As noted by Shapiro \cite{shapiro}, (\ref{eq:27}) can be written
as an identity for Chebyshev polynomials. The Chebyshev
polynomials of the second kind $U_n(a)$ $(n\geq0)$
(\seqnum{A093614}) can be defined by the generating function
    \begin{equation*}
         \frac{1}{1-2az+z^2} = \sum_{n=0}^{\infty}U_n(a)z^n.
    \end{equation*}
By substituting $-2ai$ for $a$ and $iz$ for $x$ in equation
(\ref{eq:11}) we have the relation $U_n(a)=i^nf_n(-2ai)$, and from
the identity (\ref{eq:12}) we have
   \begin{equation*}
    U_n(a)=\sum_{k=0}^{\left\lfloor \frac{n}{2}
\right\rfloor}{\binom{n-k}{k}}(-1)^k(2a)^{n-2k}.
   \end{equation*}

By replacing $a$, $b$, and $x$ with $-2ai$, $-2bi$, and $-z$
respectively in equation (\ref{eq:27}) we can obtain the
Chebyshev polynomial identity
\begin{equation*} 
    \begin{split}
        \sum_{n=0}^\infty U_n(a)U_n(b) z^{n}=
         \frac{1-z^2}{1-4abz-(2-4a^2-4b^2)z^2-4abz^3+z^4}.
         \end{split}
    \end{equation*}

We now want to prove an identity which we will use later.
\begin{lemma}\label{eq:51} For $m\geq -1$ and $n\geq -1$,
\begin{align}
f_{m}(a)f_{n+1}(a)-f_{m+1}(a)f_{n}(a)&=(-1)^{\min(m,n+1)}f_{|m-n|-1}(a),\label{eq:28}
\end{align}
where $f_{-1}(a)=0$.
\end{lemma}

\begin{proof}
Fix $m>n>0$ and let $A$ be the set of tilings of a $1\times m$
strip and a $1\times (n+1)$ strip with $1\times1$ square bricks
weighted by $a$ and $1\times2$ rectangular bricks weighted by $1$.
Then there are $f_{m}(a)f_{n+1}(a)$ weighted tilings in $A$.
Similarly, let $B$ be the set of tilings of a $1\times (m+1)$
strip and a $1\times n$ strip with $1\times1$ square bricks
weighted by $a$ and $1\times2$ rectangular bricks weighted by $1$.
Then there are $f_{m+1}(a)f_{n}(a)$ weighted tilings in $B$. Now
we will find a bijection from a subset $A$ to the set $B$ if $n$
is odd, a bijection from a subset of $B$ to the set $A$ if $n$ is
even that proves (\ref{eq:28}). Let's consider a tiling in $A$
drawn in two rows so that the top row is a $1\times m$ strip and
the bottom row is a $1\times (n+1)$ strip indented $m-n$ spaces,
as follows:
        \squaresize = 16pt \thickness = 1pt \Thickness = 0pt
        \[\Young{  & r & l& r & l &  &   & r &lr \cdots &l  &  &   &  r & l     \cr
         ludr\hspace{10pt} \longleftarrow& lrud  &ludr  & lrud \!\!\!\!\!\! m\!-\!n & ludr & lrud  \longrightarrow \hspace{10pt}&    &lr &  r  & lru \cdots
         &lr  & r & l & r & l  \cr}\]
 Let's
find the rightmost vertical line segment, if there is one, that
passes through both strips without cutting through the middle of
some brick. We call the part of the tiling to the right of this
line the tail of the tiling. In the following figure the tail is
separated.
        \[\Young{  & r & l& r & l &  &   & r &lr \cdots &l  &   &udlr &   &  r & l     \cr
         ludr & lrud  &ludr & lrud  & ludr & lrud   &    &lr  &  r  & lru \cdots
          & lr & udr & r & l & r & l    \cr}\]
Switching the two rows of the tail of this tiling produces the
          following tiling in $B$:
        \[\Young{  & r & l& r & l &  &   & r &lr \cdots &l  &   & r & l & r & l    \cr
         ludr \hspace{10pt}  \longleftarrow& lrud  &ludr  & lrud \!\!\!\!\!\! m\!-\!n & ludr & lrud  \longrightarrow \hspace{10pt} &  &lr &  r  & lru \cdots
         & lr  &   & r & l      \cr}\]
where the top row is a $1\times (m+1)$ strip and the bottom row is
a $1\times n$ strip indented $m-n$ spaces.

When $n$ is odd, this tail switching pairs up every element of $A$
with every element of $B$ except for the tilings in $A$ of the
form:
        \squaresize = 17pt \thickness = 1pt \Thickness = 0pt
        \[\Young{r & rl & rl \cdots \cdots&  rl  & l & r & l  & r &lr \cdots &l  & r & l  \cr
         ludr \hspace{10pt}  \longleftarrow& lrud  &ludr  & lrud \!\!\!\!\!\! m\!-\!n & ludr & lrud  \longrightarrow \hspace{10pt} &   r &l &  r  & lru \cdots
         &lr  & r & l  \cr}\]
         where the top row is a $1\times m$ strip, the bottom row
         is a
         $1\times(n+1)$ strip indented $m-n$ spaces, $\,\raise -4pt \vbox{\Young{ r & rl & rl \cdots \cdots&  rl  & l \cr}}\,\,$
         represents any strip of length
           $m-n-1$ tiled with $1\times1$ square bricks and $1\times2$
            rectangular bricks, and every other brick is $1\times 2$.
In this case, tail switching cannot be applied to the tiling. So
there are $f_{m-n-1}(a)$ weighted tilings in the set $A$ which
cannot matched with those in the set $B$ by tail switching.
Therefore we have
$f_{m}(a)f_{n+1}(a)-f_{m+1}(a)f_{n}(a)=f_{m-n-1}(a)$.

When $n$ is even, tail switching pairs up every element of $A$
with every element of $B$ except for the tilings in $B$ of the
form:
        \[\Young{ r & rl & rl \cdots \cdots&  rl  & l & r & l  & r &lr \cdots &l  & r & l  &  r & l  \cr
         ludr\hspace{10pt} \longleftarrow& lrud  &ludr  & lrud \!\!\!\!\!\! m\!-\!n & ludr & lrud  \longrightarrow \hspace{10pt}&   r &l &  r  & lru \cdots
         &lr  & r & l  \cr}\]
         where the top row is a $1\times (m+1)$ strip, the bottom row
         is a
         $1\times n$ strip indented $m-n$ spaces, $\,\raise -4pt \vbox{\Young{ r & rl & rl \cdots \cdots&  rl  & l \cr}}\,\,$
         represents any strip of length
           $m-n-1$ tiled with $1\times1$ square bricks and $1\times2$
            rectangular bricks, and every other brick is $1\times 2$.
In this case, tail switching cannot be applied to the tiling. So
there are $f_{m-n-1}(a)$ weighted tilings in the set $B$ which
cannot matched with those in the set $A$ by tail switching.
Therefore we have
$f_{m}(a)f_{n+1}(a)-f_{m+1}(a)f_{n}(a)=-f_{m-n-1}(a)$.

In the case $m>n=0$, the definition of the tail must be modified
slightly. We leave the details to the reader. Now suppose $n>m$.
Let $D(m,n)=f_{m}(a)f_{n+1}(a)-f_{m+1}(a)f_{n}(a)$. Then
$D(m,n)=-D(n,m)=(-1)^mf_{n-m-1}(a)$. This is equivalent to the
desired formula. In the other cases in which $m$ or $n$ is $-1$ or
$m=n$, we can easily see that equation (\ref{eq:28}) is true
because $f_{-1}(a)=0$.
\end{proof}

A special case of the identity (\ref{eq:28}) for $m=n+1$ and $a=1$
is Cassini's Fibonacci identity
$f_{n+1}^2-f_{n+2}f_{n}=(-1)^{n+1}$ which was proved in the same
way in \cite[p.~8]{benjamin-quinn} and \cite{WZ}.

Next, we can use this combinatorial method to obtain an explicit
formula for Hadamard product $(1-ax-x^2)^{-1}\ast
(1-bx-x^n)^{-1}$.
\begin{theorem}\label{eq:61}
       The Hadamard product
            \[\frac{1}{1-ax-x^2}\ast \frac{1}{1-bx-x^n}\]
        is equal to
\[ \frac{1-f_{n-2}x^n}{1-abx-b^2x^2-(f_n+f_{n-2})x^n-(2bf_{n-1}-abf_{n-2})x^{n+1}+(-1)^nx^{2n}}\]
where $f_n$ represents $f_n(a)$, $f_{-1}=0$, and $n\geq 2$.
\end{theorem}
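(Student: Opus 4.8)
The plan is to imitate Shapiro's argument for (\ref{eq:00}), replacing the bottom row by tilings with $1\times1$ squares weighted $b$ and $1\times n$ bricks weighted $1$. First I would observe that $[x^k]\,(1-bx-x^n)^{-1}$ counts tilings of a $1\times k$ strip by $b$-weighted squares and unit-weighted $1\times n$ bricks, so the coefficient of $x^k$ in the Hadamard product is the total weight of tilings of a $2\times k$ rectangle whose top row is tiled by $a$-squares and unit dominoes, contributing $f_k=f_k(a)$ from (\ref{eq:58}), and whose bottom row is tiled by $b$-squares and $1\times n$ bricks. Cutting along every vertical line that crosses no brick interior factors such a tiling uniquely into prime blocks, so the Hadamard product equals $1/(1-P(x))$, where $P(x)$ is the generating function for weighted prime blocks. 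Everything then comes down to finding $P(x)$.

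To classify prime blocks I would sort them by the number $j$ of $1\times n$ bricks in the bottom row. If $j=0$ the bottom row is all squares, and the only tilings with no full vertical cut have length $1$ (weight $ab$) or length $2$ with a domino on top (weight $b^2$). If $j\geq1$, note that a bottom $1\times n$ brick straddles every vertical line interior to it but neither of the two lines flanking it, while a bottom square straddles no line at all; hence a bottom square anywhere except at the extreme left or right end would leave two consecutive vertical lines each needing to be straddled by a domino of the top row, which is impossible. So for $j\geq1$ the bottom row must be $B^{j}$, $SB^{j}$, $B^{j}S$, or $SB^{j}S$ (writing $B$ for a brick and $S$ for a square), of lengths $jn$, $jn+1$, $jn+1$, $jn+2$ respectively.

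For each of these bottom words the set of vertical lines not straddled by the bottom row is forced to carry dominoes of the top row; these dominoes cut the remaining top cells into segments, and a short computation shows these segments have length $n-1$ at the two ends (when present) and $n-2$ in between, each filled freely and so contributing a factor $f_{n-1}$ or $f_{n-2}$. Carrying out this bookkeeping and summing the geometric series in $x^{n}$ gives
\[ P(x)=abx+b^{2}x^{2}+f_{n}x^{n}+\frac{f_{n-1}^{2}x^{2n}+2bf_{n-1}x^{n+1}+b^{2}f_{n-2}x^{n+2}}{1-f_{n-2}x^{n}}. \]

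Finally I would form $1/(1-P(x))$ by clearing the denominator $1-f_{n-2}x^{n}$: expanding $(1-abx-b^{2}x^{2}-f_{n}x^{n})(1-f_{n-2}x^{n})$ and subtracting the numerator above, the $b^{2}f_{n-2}x^{n+2}$ terms cancel and the coefficient of $x^{2n}$ becomes $f_{n}f_{n-2}-f_{n-1}^{2}$, which by (\ref{eq:23}), applied with its $m$ replaced by $n-2$ and its $n$ replaced by $n-1$, equals $(-1)^{n}f_{0}=(-1)^{n}$. The result is exactly $(1-f_{n-2}x^{n})$ over the denominator displayed in the theorem. The one genuinely delicate part is the prime-block classification: ruling out interior squares in the bottom row, and then identifying the forced top dominoes and the exact lengths $n-1$ versus $n-2$ of the free segments; the rest is routine summation together with the single application of (\ref{eq:23}). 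It would also be worth checking the boundary case $n=2$, where bottom bricks are themselves dominoes and the formula must reduce to (\ref{eq:00}) with parameters $a$ and $b$.
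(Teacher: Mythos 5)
Your proposal is correct and follows essentially the same route as the paper: interpret the Hadamard product as weighted tilings of a $2\times k$ rectangle, factor uniquely into prime blocks, classify the prime blocks (your grouping by the bottom-row pattern $B^j$, $SB^j$, $B^jS$, $SB^jS$ is just a repackaging of the paper's classification by length $nk$, $nk+1$, $nk+2$), sum the geometric series to get the same $P_n(x)$, and finish with the Cassini-type case of Lemma 1.1, $f_nf_{n-2}-f_{n-1}^2=(-1)^n$, exactly as the paper does. The only differences are cosmetic (pulling the $b^2x^2$ term out of the series and an equivalent substitution into the lemma), so no further comparison is needed.
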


\begin{proof}
We now consider the Hadamard product
        \begin{equation} \label{eq:29}
        \frac{1}{1-ax-x^2}\ast \frac{1}{1-bx-x^n}
        \end{equation}
as counting tilings, using $1\times n$ rectangular bricks instead
of $1\times 2$ rectangular bricks in the bottom row. In this
setting a prime block cannot have a $1\times1$ square brick in the
bottom row anywhere except at the
beginning or end. The possible prime blocks can be classified as follows:\\

\noindent The prime block of length $1$:
        \squaresize = 9pt \thickness = 1pt \Thickness = 0pt
        \[\Young{  \cr
         \cr}\]
The prime blocks of length $n$:
        \[\Young{  \bigotimes & \bigotimes &\bigotimes &\bigotimes& \bigotimes  \cr
        r&lr & lr&lr &l \cr}\]
The prime blocks of length $nk$ $(k\geq2)$:
        \[\Young{  \bigotimes & \bigotimes &\bigotimes &\bigotimes& r&l& \bigotimes& \bigotimes&\bigotimes& r&l& \bigotimes& \bigotimes&\bigotimes & rud  & lrud \cdots &  lud & r&l& \bigotimes& \bigotimes & \bigotimes & r&l& \bigotimes& \bigotimes& \bigotimes & \bigotimes \cr
        r&lr & lr&lr &l& r&lr & lr&lr &l& r&lr & lr&lr &l & udr  & udrl \cdots  &udlr
        & r & lr & lr & lr  & l & lr & lr & lr &lr & l
        \cr}\]
The prime blocks of length $nk+1$ $(k\geq1)$:
        \[\Young{  \bigotimes & \bigotimes &\bigotimes &\bigotimes& r&l& \bigotimes& \bigotimes&\bigotimes& r&l& \bigotimes& \bigotimes&\bigotimes & rud  & lrud \cdots &  lud & r&l& \bigotimes& \bigotimes & \bigotimes & r&l& \bigotimes& \bigotimes& \bigotimes & r & l \cr
        r &lr & lr&lr &l& r&lr & lr&lr &l& r&lr & lr&lr &l & udr  & udrl \cdots  &udlr
        & r & lr & lr & lr  & l & lr & lr & lr &lr & l &
        \cr}\]
        \[\Young{  r & l & \bigotimes &\bigotimes &\bigotimes& r&l& \bigotimes& \bigotimes&\bigotimes& r&l& \bigotimes& \bigotimes&\bigotimes & rud  & lrud \cdots &  lud & r&l& \bigotimes& \bigotimes & \bigotimes & r&l& \bigotimes& \bigotimes& \bigotimes & \bigotimes \cr
        & r&lr & lr&lr &l& r&lr & lr&lr &l& r&lr & lr&lr &l & udr  & udrl \cdots  &udlr
        & r & lr & lr & lr  & l & lr & lr & lr &lr & l
        \cr}\]
The prime blocks of length $nk+2$ $(k\geq0)$:
        \[\Young{  r & l & \bigotimes &\bigotimes &\bigotimes& r&l& \bigotimes& \bigotimes&\bigotimes& r&l& \bigotimes& \bigotimes&\bigotimes & rud  & lrud \cdots &  lud & r&l& \bigotimes& \bigotimes & \bigotimes & r&l& \bigotimes& \bigotimes& \bigotimes & r & l \cr
        & r&lr & lr&lr &l& r&lr & lr&lr &l& r&lr & lr&lr &l & udr  & udrl \cdots  &udlr
        & r & lr & lr & lr  & l & lr & lr & lr &lr & l &
        \cr}\]
    where $\Young{\bigotimes &\bigotimes &  \bigotimes \cr}$, $\Young{\bigotimes
    & \bigotimes & \bigotimes&\bigotimes  \cr}$, and $\Young{\bigotimes &\bigotimes &  \bigotimes &
    \bigotimes&\bigotimes  \cr}$ represent any strips of length $n-2$, $n-1$, and
    $n$ respectively tiled with $1\times1$ square bricks and $1\times2$
    rectangular bricks.

Thus the generating function $P_n(x)$ for the weighted prime
blocks of the Hadamard product (\ref{eq:29}) is
        \begin{align}
        P_n(x) &= abx+f_n(a)x^n+\sum_{k=1}^{\infty}
        2bf_{n-1}(a)f_{n-2}(a)^{k-1} x^{nk+1}\nonumber \\
        &\qquad + \sum_{k=0}^{\infty} b^2 f_{n-2}(a)^k
        x^{nk+2}  + \sum_{k= 2}^{\infty} f_{n-1}(a)^2 f_{n-2}(a)^{k-2}x^{nk} \nonumber\\
        &=
        abx+f_n(a)x^n+\frac{2bf_{n-1}(a)x^{n+1}}{1-f_{n-2}(a)x^n}+\frac{b^2
        x^2}{1-f_{n-2}(a)x^n}
        +\frac{f_{n-1}(a)^2x^{2n}}{1-f_{n-2}(a)x^n} \nonumber\\
            &= \frac{ abx+b^2x^2+f_n(a)x^n+\big(2bf_{n-1}(a)-abf_{n-2}(a)\big)x^{n+1}
            +(-1)^{n-1} x^{2n}}{1-f_{n-2}(a)x^n}\nonumber
        \end{align}
        where we have used the identity $f_{n-1}(a)^2-f_n(a)f_{n-2}(a)=(-1)^{n-1}$ obtained by substituting $n-1$ for $m$
        and $n-2$ for $n$ in the identity (\ref{eq:28}) of Lemma \ref{eq:51}.
Since any tiling can be factored uniquely as a sequence of prime
blocks, we obtain that $(1-ax-x^2)^{-1}  *
(1-bx-x^n)^{-1}=1/(1-P_n(x))$. This is equivalent to the desired
formula.
\end{proof}
Note that Theorem \ref{eq:61} also holds for $n=1$. The
polynomials $f_n(a)+f_{n-2}(a)$ in Theorem \ref{eq:61} are Lucas
polynomials (\seqnum{A114525}).

Using (\ref{eq:15}) we can prove Han's result
        \begin{equation} \label{eq:30}
         \Omega_{\geq} \frac{1}{(1-z x -zx^2)(1-y/ x -y/ x^2)}
         =\frac{1+z^2y}{(1-2z)(1-3zy-z^2y-zy^2)},
         \end{equation}
by computing the Hadamard product $1/(1-zx-zx^2)
* 1/((1-yx-yx^2)(1-x))$ and then setting
$x=1$. Substituting $z^{1/2}$ for $a$ and $z^{1/2} x$ for $x$ in
$1/(1-ax-x^2)$ gives $ 1/(1-zx -zx^2) $, and substituting
$(y+1)/(-y)^{1/3}$ for $b$ and $(-y)^{1/3} x$ for $x$ in
$1/(1-bx-x^3)$ gives $ 1 /( 1-(y+1) x +y x^3) $. Making these
substitutions in Theorem \ref{eq:61} and using the fact that if
$f(x) * g(x)=h(x)$ then $f(\alpha x)
* g(\beta x)= h(\alpha\beta x)$, we have that the Hadamard product
\[ \frac{1}{1-zx -zx^2}* \frac{1}{1-(y+1) x +y x^3} \]
is equal to
\[ \frac{1+z^2y  x^3 }{1-(zy+z)x-z(y+1)^2x^2 +(3z^2y+z^3y)x^3+(z^3y+2z^2y)(y+1)x^4-z^3y^2x^6}.\]
Then setting $x=1$ gives Han's result (\ref{eq:30}), which he
proved in a more complicated way.

We now modify the above setting to obtain a formula for the
Hadamard product $1/(1-ax-x^2)\ast x^m/(1-bx-x^2)$.
\begin{theorem} The Hadamard product
 \[  \frac{1}{1-ax-x^2}\ast \frac{x^m}{1-bx-x^2} \]
is equal to
        \begin{equation*}
                \frac{f_m(a)x^m+bf_{m-1}(a)x^{m+1}-f_{m-2}(a)x^{m+2}}{1-abx-(2+a^2+b^2)x^2-abx^3+x^4}
                \end{equation*}
                where $f_{-1}(a)=0$, and $f_{-2}(a)=1$.
\end{theorem}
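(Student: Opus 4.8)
The plan is to avoid re-running a prime-block analysis from scratch and instead reduce the claim to the closed form (\ref{eq:00}) already proved for $\frac{1}{1-ax-x^2}\ast\frac{1}{1-bx-x^2}$. Abbreviate $f_k=f_k(a)$ and $g_k=f_k(b)$. Since $\frac{1}{1-ax-x^2}=\sum_{k\ge 0}f_kx^k$ and $\frac{x^m}{1-bx-x^2}=\sum_{j\ge m}g_{j-m}x^j$, the Hadamard product in question equals $x^m R_m(x)$ with $R_m(x):=\sum_{i\ge 0}f_{i+m}g_ix^i$, so the whole problem is to evaluate $R_m$.

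First I would use the addition formula for Fibonacci polynomials, $f_{i+m}(a)=f_m(a)f_i(a)+f_{m-1}(a)f_{i-1}(a)$ for $i,m\ge 0$ (with $f_{-1}(a)=0$): a tiling of a $1\times(i+m)$ strip either admits a vertical cut between cells $m$ and $m+1$, or has a $1\times2$ brick straddling those cells, and the two cases give the two terms; equivalently one checks it by induction from $f_k=af_{k-1}+f_{k-2}$. Substituting into $R_m$ gives
\[ R_m(x)=f_m\,S(x)+f_{m-1}\,x\,U(x),\qquad S(x):=\sum_{i\ge 0}f_ig_ix^i,\quad U(x):=\sum_{i\ge 0}f_ig_{i+1}x^i, \]
where $S(x)$ is precisely the series evaluated in (\ref{eq:18}) and (\ref{eq:00}).

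Next I would pin down $U(x)$ in terms of $S(x)$. Writing $g_{i+1}=bg_i+g_{i-1}$ and $f_{i+1}=af_i+f_{i-1}$ (valid for $i\ge 0$, the $-1$ terms vanishing) and introducing $T(x):=\sum_{i\ge 0}f_{i+1}g_ix^i$, one obtains the linear system $U=bS+xT$ and $T=aS+xU$, hence $U(x)=\frac{(b+ax)S(x)}{1-x^2}$. Feeding this back,
\[ R_m(x)=S(x)\,\frac{f_m(1-x^2)+f_{m-1}x(b+ax)}{1-x^2}=S(x)\,\frac{f_m+bf_{m-1}x-f_{m-2}x^2}{1-x^2}, \]
where the second equality uses $af_{m-1}-f_m=-f_{m-2}$. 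Since (\ref{eq:00}) gives $S(x)=\frac{1-x^2}{1-abx-(2+a^2+b^2)x^2-abx^3+x^4}$, the factor $1-x^2$ cancels, and multiplying $R_m(x)$ by $x^m$ produces exactly the stated expression. The conventions $f_{-1}(a)=0$ and $f_{-2}(a)=1$ are exactly what makes the argument valid at $m=0$ and $m=1$ (for $m=0$ the numerator collapses to $1-x^2$ and the identity becomes (\ref{eq:00}) itself).

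No step here is a serious obstacle; the only care required is in the index bookkeeping at $i=0$ (so that the $f_{-1}$ and $g_{-1}$ terms genuinely drop out) and in solving the $2\times2$ system for $U$ correctly. One can instead give a purely combinatorial proof paralleling that of Theorem~2.1: interpret $f_jg_{j-m}$ as a $2\times j$ tiling whose bottom row starts with one forced $1\times m$ brick, note that no full vertical cut lies strictly inside that brick, so each tiling factors as one initial prime block (containing the forced brick) followed by an arbitrary word in the ordinary prime blocks of generating function $P_2(x)$, and sum the initial blocks by length to get a numerator $Q_m(x)$ with $Q_m(x)/(1-P_2(x))$ the answer; but this requires re-deriving the several geometric-series contributions and is more laborious than the reduction above.
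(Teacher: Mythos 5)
Your proposal is correct, and it takes a genuinely different route from the paper. The paper's proof is combinatorial in the same spirit as its Theorem 2.1: it models the coefficient of $x^k$ as a tiling of a $2\times k$ rectangle whose bottom row is forced to begin with a $1\times m$ brick, classifies the possible ``first blocks'' containing that brick (of lengths $m$, $m+2k$, $m+2k+1$), sums them into a generating function $Q_{m,2}(x)=\bigl(f_m(a)x^m+bf_{m-1}(a)x^{m+1}-f_{m-2}(a)x^{m+2}\bigr)/(1-x^2)$, and then invokes the unique factorization into a first block followed by prime blocks to conclude the product is $Q_{m,2}(x)/(1-P_2(x))$. You instead reduce everything algebraically to the already-established case $m=0$, i.e.\ to (\ref{eq:00}): the addition formula $f_{i+m}(a)=f_m(a)f_i(a)+f_{m-1}(a)f_{i-1}(a)$ splits $R_m(x)=\sum_i f_{i+m}(a)f_i(b)x^i$ as $f_mS+f_{m-1}xU$, and the small linear system $U=bS+xT$, $T=aS+xU$ gives $U=(b+ax)S/(1-x^2)$, after which the recurrence $af_{m-1}-f_m=-f_{m-2}$ (which is exactly where the conventions $f_{-1}=0$, $f_{-2}=1$ enter) yields the stated numerator; I checked the index bookkeeping at $i=0$ and the shift $x^mR_m(x)$, and all steps are sound as formal power series. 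Your route is shorter and reuses (\ref{eq:00}) rather than re-enumerating blocks, at the cost of importing the addition formula (itself provable by the same cut-or-straddle tiling argument); the paper's route is more laborious here but uniform with the rest of the paper, generalizing directly to the $1\times n$ bottom-row bricks of Theorems 2.3 and 2.4, and it explains the numerator combinatorially as the first-block generating function. Your closing sketch of the combinatorial alternative is essentially the paper's actual proof.
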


\begin{proof} When $m=0$, the formula reduces to (\ref{eq:27}). When $m\geq 1$, we consider the Hadamard product
                \begin{equation*} 
                \frac{1}{1-ax-x^2}\ast \frac{x^m}{1-bx-x^2}
                \end{equation*}
as counting tilings. We modify the tilings of a $2\times k$
rectangle so that the bottom row starts with a $1\times m$
rectangular brick to account for the factor $x^m$ in
$x^m/(1-bx-x^2)$. In this setting the first block where the bottom
row starts with a $1\times m$ rectangular brick will be different
from all the others, but the following blocks can be built up from
a sequence of prime blocks which are exactly the same as the prime
blocks in the Hadamard product $(1-ax-x^2)^{-1}\ast
(1-bx-x^2)^{-1}$. The
first blocks can be classified as follows:\\

\noindent The first blocks of length $m$:
        \squaresize = 10pt \thickness = 1pt \Thickness = 0pt
        \[\Young{  \bigotimes & \bigotimes &\bigotimes &\bigotimes& \bigotimes  \cr
        r&lr & lr&lr &l \cr}\]
The first blocks of length $m+2k$ $(k\geq1)$:
        \[\Young{  \bigotimes & \bigotimes &\bigotimes &\bigotimes& r&l & r&l &  udr & udrl \cdots & lud & r&l  \cr
        r&lr & lr&lr &l & r& lr& r  & l & udr &  rudl \cdots & lud &  \cr}\]
The first blocks of length $m+2k+1$ $(k\geq0)$:
        \[\Young{  \bigotimes & \bigotimes &\bigotimes &\bigotimes& r&l & r&l &  udr & udrl \cdots & lud & lr&lr & \cr
        r&lr & lr&lr &l&   r& lr& r  & l & udr &  rudl \cdots & lud & lr & l  \cr}\]
    where $\Young{\bigotimes &\bigotimes &  \bigotimes & \bigotimes \cr}$ and $\Young{\bigotimes &\bigotimes &  \bigotimes &
    \bigotimes&\bigotimes  \cr}$ represent any strips of length $m-1$ and
    $m$ respectively tiled with $1\times1$ square bricks and $1\times2$
    rectangular bricks.
So the generating function $Q_{m,2}(x)$ where $m\geq 2$ for
weighted first blocks is
\begin{align}
Q_{m,2}(x) &= f_m(a)x^m+\sum_{k= 1}^{\infty}
a f_{m-1}(a) x^{m+2k} +\sum_{k=0}^{\infty}b f_{m-1}(a)x^{m+2k+1}  \nonumber\\
&= f_m(a)x^m + \frac{af_{m-1}(a)x^{m+2}}{1-x^2}
+\frac{bf_{m-1}(a)x^{m+1}}{1-x^2} \nonumber \\
&= \frac{f_m(a)x^m+bf_{m-1}(a)x^{m+1}-f_{m-2}(a)x^{m+2}}{1-x^2}.
\nonumber
\end{align}
Since any tiling can be factored uniquely as a first block
followed by a sequence of prime blocks, we have $1/(1-ax-x^2) \ast
x^m/(1-bx-x^2)=Q_{m,2}(x)/(1-P_2(x))$ where $m\geq1$. This is
equivalent to the desired formula.
\end{proof}

Now we can generalize the previous theorem by computing an
explicit formula for the Hadamard product $1/(1-ax-x^2)\ast
x^m/(1-bx-x^n)$.

\begin{theorem}\label{eq:63} The Hadamard product
\[\frac{1}{1-ax-x^2}\ast \frac{x^m}{1-bx-x^n}\]
is equal to
\[ \frac{f_mx^m+bf_{m-1}x^{m+1}+(-1)^{\min(m-1,n-1)}f_{|m-n+1|-1}x^{m+n}}{1-abx-b^2x^2-(f_n+f_{n-2})x^n-(2bf_{n-1}-abf_{n-2})x^{n+1}+(-1)^nx^{2n}}
\]
where $f_n$ represents $f_n(a)$, $f_{-1}=0$, $m\geq 0$, and $n\geq
2$.
\end{theorem}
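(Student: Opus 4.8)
The plan is to mimic the proofs of Theorems 2.1 and 2.2: interpret $\frac{1}{1-ax-x^2}\ast\frac{x^m}{1-bx-x^n}$ as a weighted count of tilings of a $2\times k$ rectangle whose top row uses $1\times1$ bricks (weight $a$) and $1\times2$ bricks (weight $1$), and whose bottom row begins with a single $1\times m$ brick and thereafter uses $1\times1$ bricks (weight $b$) and $1\times n$ bricks (weight $1$), accounting for the factor $x^m$. Every such tiling factors uniquely as a \emph{first block} --- the portion up to the first vertical line cutting neither row through a brick --- followed by a sequence of prime blocks. The prime blocks that can occur after the first block are exactly those enumerated in the proof of Theorem 2.1, so they contribute the generating function $P_n(x)$ computed there, and it remains only to find the generating function $Q_{m,n}(x)$ of the first blocks; then the Hadamard product equals $Q_{m,n}(x)/(1-P_n(x))$.

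First I would classify the first blocks. A first block of length $m$ has the $1\times m$ brick in the bottom and an arbitrary top tiling, contributing $f_m x^m$. For a first block of length $>m$, the seam after the $1\times m$ brick and every later seam of the bottom row must be straddled by a $1\times2$ brick of the top row. The key structural observation is that the bottom row of a first block can never contain a $1\times1$ brick in an interior position: the two top $1\times2$ bricks that would have to straddle the seams immediately to its left and right would overlap in the cell above it. Hence the bottom of a first block is a $1\times m$ brick, followed by $k\ge 0$ copies of the $1\times n$ brick, optionally followed by a single $1\times1$ brick at the very end. Counting the top tilings of the regions between the straddling $1\times2$ bricks --- a free strip of length $m-1$ at the left, $k-1$ free strips of length $n-2$ in the middle, and either a free strip of length $n-1$ at the right (no trailing $1\times1$ brick) or nothing beyond the middle strips (trailing $1\times1$ brick) --- gives four families of first blocks, of lengths $m$, $m+1$, $m+nk$ $(k\ge1)$ and $m+nk+1$ $(k\ge1)$, with weights $f_m$, $bf_{m-1}$, $f_{m-1}f_{n-1}f_{n-2}^{\,k-1}$ and $bf_{m-1}f_{n-2}^{\,k}$ respectively.

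Summing the resulting geometric series gives
\[ Q_{m,n}(x)=f_m x^m+bf_{m-1}x^{m+1}+\frac{f_{m-1}f_{n-1}x^{m+n}}{1-f_{n-2}x^n}+\frac{bf_{m-1}f_{n-2}x^{m+n+1}}{1-f_{n-2}x^n}, \]
and placing this over the common denominator $1-f_{n-2}x^n$ the two terms of degree $m+n+1$ cancel, leaving numerator $f_m x^m+bf_{m-1}x^{m+1}+(f_{m-1}f_{n-1}-f_m f_{n-2})x^{m+n}$. Applying Lemma 1.1 with $m-1$ in place of $m$ and $n-2$ in place of $n$ converts $f_{m-1}f_{n-1}-f_m f_{n-2}$ into $(-1)^{\min(m-1,n-1)}f_{|m-n+1|-1}$, so that $Q_{m,n}(x)=\bigl(f_m x^m+bf_{m-1}x^{m+1}+(-1)^{\min(m-1,n-1)}f_{|m-n+1|-1}x^{m+n}\bigr)/(1-f_{n-2}x^n)$. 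Finally, from the computation of $P_n(x)$ in Theorem 2.1 one has $1-P_n(x)=\bigl(1-abx-b^2x^2-(f_n+f_{n-2})x^n-(2bf_{n-1}-abf_{n-2})x^{n+1}+(-1)^nx^{2n}\bigr)/(1-f_{n-2}x^n)$, so the factor $1-f_{n-2}x^n$ cancels in $Q_{m,n}(x)/(1-P_n(x))$ and the stated formula drops out.

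I expect the main obstacle to be the careful classification of the first blocks --- in particular justifying rigorously that no interior $1\times1$ brick can occur in the bottom row, and getting the lengths of the free top strips exactly right, including the boundary strips of lengths $m-1$ and $n-1$ and the $k=0$ and small-$m$ edge cases --- together with verifying that the identity needed for the final coefficient is precisely the quoted instance of Lemma 1.1, so that the degenerate cases $m=n-1$ (where $f_{-1}=0$ makes the $x^{m+n}$ term vanish) and $m\ge n$ are handled uniformly.
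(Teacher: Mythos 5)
Your proposal is correct and follows essentially the same route as the paper: decompose each tiling into a first block followed by prime blocks, compute $Q_{m,n}(x)$ (your four families are the paper's three, with the $k=0$ trailing-square case listed separately), apply Lemma 1.1 with $m-1,n-2$, and divide by $1-P_n(x)$ from Theorem 2.1. Your explicit justification that the bottom row of a first block admits no interior $1\times1$ brick is the same structural fact the paper uses (via its figures) to classify the blocks.
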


\begin{proof} When $m=0$, Theorem \ref{eq:63} reduces to Theorem \ref{eq:61}.
Let's consider the Hadamard product
        \begin{equation*}
                \frac{1}{1-ax-x^2}\ast \frac{x^m}{1-bx-x^n}
                \end{equation*}
as counting pairs of tilings where $m\geq1$ and $ n\geq 2$. We
slightly modify the above tiling by using $1\times n$ rectangular
bricks instead of $1\times 2$ rectangular bricks in the bottom
row. In this setting, the first block where the bottom row starts
with a $1\times m$ rectangular brick will be different from all
the others, but the following blocks can be built up from a
sequence of prime blocks whick are exactly the same as the prime
blocks in the Hadamard product $(1-ax-x^2)^{-1}\ast
(1-bx-x^n)^{-1}$. The
possible first blocks can be classified as follows:\\

\noindent The first blocks of length $m$:
        \squaresize = 10pt \thickness = 1pt \Thickness = 0pt
        \[\Young{  \bigoplus& \bigoplus & \bigoplus &\bigoplus &\bigoplus& \bigoplus  \cr
        r&lr & lr&lr& lr &l \cr}\]
The first blocks of length $m+nk$ $(k\geq1)$:
        \[\Young{ \bigoplus & \bigoplus & \bigoplus &\bigoplus &\bigoplus& r&l& \bigotimes& \bigotimes&\bigotimes& r&l& \bigotimes& \bigotimes&\bigotimes & rud  & lrud \cdots &  lud & r&l& \bigotimes& \bigotimes & \bigotimes & r&l& \bigotimes& \bigotimes& \bigotimes & \bigotimes \cr
        r&lr & lr &  lr&lr &l& r&lr & lr&lr &l& r&lr & lr&lr &l & udr  & udrl \cdots  &udlr
        & r & lr &lr & lr  & l & lr & lr & lr &lr & l
        \cr}\]
The first blocks of length $m+nk+1$ $(k\geq0)$:
        \[\Young{ \bigoplus&  \bigoplus & \bigoplus &\bigoplus &\bigoplus& r&l& \bigotimes& \bigotimes&\bigotimes& r&l& \bigotimes& \bigotimes&\bigotimes & rud  & lrud \cdots &  lud & r&l& \bigotimes& \bigotimes & \bigotimes & r&l& \bigotimes& \bigotimes& \bigotimes & r & l \cr
        r &lr & lr&lr&lr &l& r&lr & lr&lr &l& r&lr & lr&lr &l & udr  & udrl \cdots  &udlr
        & r & lr & lr & lr  & l & lr & lr & lr &lr & l &
        \cr}\]
where $\Young{\bigotimes &\bigotimes &  \bigotimes \cr}$,
$\Young{\bigotimes
    & \bigotimes & \bigotimes&\bigotimes  \cr}$, $\Young{\bigoplus &\bigoplus &  \bigoplus &
    \bigoplus&\bigoplus  \cr}$, $\Young{\bigoplus &\bigoplus &  \bigoplus &
    \bigoplus&\bigoplus & \bigoplus \cr}$ represent any strips of length $n-2$, $n-1$, $m-1$ and
    $m$ respectively tiled with $1\times1$ square bricks and $1\times2$
    rectangular bricks.

So the generating function $Q_{m,n}(x)$ where $m\geq 1$ and $n\geq
2$ for the weighted first blocks is

\begin{align}
    Q_{m,n}(x) &= f_m(a)x^m+\sum_{k= 1}^{\infty}
    f_{m-1}(a)f_{n-1}(a)f_{n-2}(a)^{k-1} x^{m+nk} \nonumber\\
    &\qquad +\sum_{k=0}^{\infty} bf_{m-1}(a)f_{n-2}(a)^k x^{m+nk+1}  \nonumber\\
    &= f_m(a)x^m + \frac{f_{m-1}(a)f_{n-1}(a)x^{m+n}}{1-f_{n-2}(a)x^n}
    +\frac{bf_{m-1}(a)x^{m+1}}{1-f_{n-2}(a)x^n} \nonumber \\
    &=
    \frac{f_m(a)x^m+bf_{m-1}(a)x^{m+1}+\Big(f_{m-1}(a)f_{n-1}(a)-f_m(a)f_{n-2}(a)\Big)x^{m+n}}{1-f_{n-2}(a)x^n}
    \nonumber \\
    &=
    \frac{f_m(a)x^m+bf_{m-1}(a)x^{m+1}+(-1)^{\min(m-1,n-1)}f_{|m-n+1|-1}(a)x^{m+n}}{1-f_{n-2}(a)x^n}
    \nonumber
\end{align}
where we use the identity
$f_{m-1}(a)f_{n-1}(a)-f_m(a)f_{n-2}(a)=(-1)^{\min(m-1,n-1)}f_{|m-n+1|-1}(a)$
obtained by substituting $m-1$ for $m$
        and $n-2$ for $n$ in the identity (\ref{eq:28}) of Lemma \ref{eq:51}.
Since any tiling can be factored uniquely as a first block
followed by a sequence of prime blocks, we have that $1/(1-ax-x^2)
\ast x^m/(1-bx-x^n) = Q_{m,n}(x)/(1-P_n(x))$ where $m\geq 1$ and
$n\geq 2$. This is equivalent to the desired formula.
\end{proof}
Note that Theorem \ref{eq:63} also holds for $n=1$. In Theorem
\ref{eq:63}, there are some special cases: when $b=0$, we have
that
\[ \sum_{k \geq 0 }f_{m+nk}(a)x^{m+nk}=
\frac{f_m(a)x^m+(-1)^{\min(m-1,n-1)}f_{|m-n+1|-1}(a)x^{m+n}}{1-(f_n(a)+f_{n-2}(a))x^n+(-1)^nx^{2n}}.\]In
particular, when $n=1$ and $b=0$, we have that
\[ \sum_{k\geq
m}f_k(a)x^k=\frac{f_m(a)x^m+(-1)^{\min(m-1,0)}f_{m-1}(a)x^{m+1}}{1-ax-x^2}.\]

Using a similar method, we can also compute an explicit formula
for the Hadamard product $x^m/(1-ax-x^2)*1/(1-x^n)$.

\begin{theorem}\label{eq:64} For positive integer $m\geq 1$ and $n \geq2$, the Hadamard product
\[\frac{x^m}{1-ax-x^2}\ast \frac{1}{1-x^n}\]
is equal to
\[ \frac{f_{n-r}(a)x^{(q+1)n}+(-1)^{n-r-1}f_{|r-1|-1}(a)x^{(q+2)n}}{1-(f_n(a)+f_{n-2}(a))x^n+(-1)^nx^{2n}}
\]
if $m=qn+r$ for some positive integers $q$ and $r$ with $0< r < n$, and is equal to
\[ \frac{x^m-f_{n-2}(a)x^{m+n}}{1-(f_n(a)+f_{n-2}(a))x^n+(-1)^nx^{2n}}
\]
if $m=qn$ for some positive integer $q$.
\end{theorem}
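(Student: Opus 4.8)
The cleanest route is to read the Hadamard product off its coefficients and reindex, reducing everything to the $b=0$ specializations of Theorems 2.1 and 2.3 already established.  Since $\frac{x^m}{1-ax-x^2}=\sum_{k\ge m}f_{k-m}(a)x^k$ and $\frac{1}{1-x^n}=\sum_{j\ge 0}x^{nj}$,
\[
\frac{x^m}{1-ax-x^2}\ast\frac{1}{1-x^n}=\sum_{k\ge m,\ n\mid k}f_{k-m}(a)\,x^k ,
\]
so, after reducing $m$ modulo $n$, the whole problem is to recognize an $n$-section generating function of the Fibonacci polynomials $f_k(a)$, and these sections are already known in closed form.

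I would first handle the case $m=qn$.  Then $k$ runs over $n(q+\ell)$, $\ell\ge 0$, with $k-m=n\ell$, so the sum equals $x^{m}\sum_{\ell\ge 0}f_{n\ell}(a)x^{n\ell}=x^{m}\left(\frac{1}{1-ax-x^2}\ast\frac{1}{1-x^n}\right)$.  By Theorem 2.1 with $b=0$ this inner product is $\frac{1-f_{n-2}(a)x^n}{1-(f_n(a)+f_{n-2}(a))x^n+(-1)^n x^{2n}}$, and multiplying by $x^m$ gives the claimed formula for $m=qn$.

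Next I would treat $m=qn+r$ with $0<r<n$.  Here $n\mid k$ and $k\ge m$ force $k=n(q+1+\ell)$, $\ell\ge 0$, with $k-m=(n-r)+n\ell$, so
\[
\frac{x^m}{1-ax-x^2}\ast\frac{1}{1-x^n}=x^{m}\sum_{\ell\ge 0}f_{(n-r)+n\ell}(a)\,x^{(n-r)+n\ell}.
\]
The sum on the right is exactly the $b=0$ case of Theorem 2.3 (the closed form displayed just above) with its parameter $m$ replaced by $n-r$; since $0<r<n$ we have $1\le n-r\le n-1$, so that formula applies and the sum equals
\[
\frac{f_{n-r}(a)x^{n-r}+(-1)^{\min(n-r-1,\,n-1)}f_{|(n-r)-n+1|-1}(a)\,x^{(n-r)+n}}{1-(f_n(a)+f_{n-2}(a))x^n+(-1)^n x^{2n}} .
\]
Using $0<r<n$ again, $\min(n-r-1,n-1)=n-r-1$ and $|(n-r)-n+1|=|1-r|=|r-1|$; multiplying by $x^{m}=x^{qn+r}$ then raises the two numerator exponents to $x^{(q+1)n}$ and $x^{(q+2)n}$, which is the claimed formula for $m=qn+r$.

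The only real work is the index and sign bookkeeping: one must check that the reindexings above are bijections onto the admissible $k$, that the minimum and the absolute value in the $b=0$ Theorem 2.3 formula collapse exactly as claimed when $0<r<n$, and that the boundary case $r=1$ is consistent --- there $f_{|r-1|-1}(a)=f_{-1}(a)=0$ deletes the top-degree numerator term, as it must, since then $n-r=n-1$ and the section $\sum_{\ell}f_{(n-1)+n\ell}(a)x^{(n-1)+n\ell}$ has a one-term numerator.  I expect this sign/index verification to be the only genuine obstacle.  A parallel tiling proof in the style of the earlier sections also works: the product counts $2\times k$ rectangles whose top row is a $1\times m$ brick followed by a Fibonacci tiling and whose bottom row is tiled by $1\times n$ bricks; the blocks after the initial first block are the $b=0$ prime blocks of Theorem 2.1, and the first-block generating function --- obtained by observing that each skipped multiple of $n$ must be straddled by a $1\times 2$ brick in the top row --- collapses via the Lemma 1.1 identity $f_{n-r-1}(a)f_{n-1}(a)-f_{n-r}(a)f_{n-2}(a)=(-1)^{n-r-1}f_{|r-1|-1}(a)$ to exactly the numerators found above.
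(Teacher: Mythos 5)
Your proposal is correct, but it takes a genuinely different route from the paper. The paper proves this theorem by a direct tiling argument: it views the product as counting $2\times k$ rectangles whose top row begins with a $1\times m$ brick and whose bottom row consists of $1\times n$ bricks, splits each tiling into a ``first block'' followed by prime blocks, computes the first-block generating function $R_{m,n}(x)$ separately for $m=qn+r$ and $m=qn$, collapses it with the Lemma 1.1 identity $f_{n-1}(a)f_{n-r-1}(a)-f_{n-2}(a)f_{n-r}(a)=(-1)^{n-r-1}f_{|r-1|-1}(a)$, and divides by $1-P_n(x)$ at $b=0$ --- exactly the tiling argument you only sketch in your last sentence. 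Your main argument instead reads off the coefficients, reindexes the surviving multiples of $n$, and recognizes the result as $x^m$ times the $b=0$ specializations of Theorem 2.1 (for $m=qn$) and of Theorem 2.3 with its $m$ replaced by $n-r$ (for $m=qn+r$, where $1\le n-r\le n-1$ so the theorem applies). That is a legitimate and shorter derivation: it exhibits Theorem 2.4 as essentially a corollary of the earlier results, with no new combinatorial analysis, whereas the paper's proof keeps the uniform prime-block method and stays self-contained at the level of tilings. Your sign and index bookkeeping ($\min(n-r-1,n-1)=n-r-1$, $|(n-r)-n+1|=|r-1|$, and the $r=1$ degeneration via $f_{-1}(a)=0$) is correct and matches the statement exactly in the $m=qn+r$ case.

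One point you should have flagged rather than passed over: in the case $m=qn$ your computation yields the numerator $x^m-f_{n-2}(a)x^{m+n}$, and so does the paper's own proof (it gets $R_{m,n}(x)=x^m$ and then $x^m/(1-P_n(x))$), whereas the theorem as printed has $x^m-f_{n-2}(a)x^{m+2}$. These agree only when $n=2$; for $n>2$ the printed exponent cannot be right, since the series contains only powers of $x$ divisible by $n$ (e.g., for $n=3$, $m=3$ the correct form $\bigl(x^3-ax^6\bigr)/\bigl(1-(3a+a^3)x^3-x^6\bigr)$ reproduces $f_0,f_3,f_6,\dots$, while the printed one introduces an $x^5$ term). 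So $x^{m+2}$ is evidently a typo for $x^{m+n}$, and your assertion that multiplying by $x^m$ ``gives the claimed formula'' is literally false as the formula is stated; your derivation is fine, but you should say explicitly that you are proving the corrected form.
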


\begin{proof}
Let's consider the Hadamard product
        \begin{equation*}
                \frac{x^m}{1-ax-x^2}\ast \frac{1}{1-x^n}
                \end{equation*}
as counting pairs of tilings. We modify the tilings of a $2\times
k$ rectangle in the proof of Theorem \ref{eq:63} so that the top
row starts with a $1\times m$ rectangular brick to account for the
factor $x^m$ in $x^m/(1-ax-x^2)$. In this setting the first block
where the top row starts with a $1\times m$ rectangular brick will
be different from all the others, but the following blocks can be
built up from a sequence of prime blocks whick are exactly the
same as the prime blocks in the Hadamard product $1/(1-ax-x^2)\ast
1/(1-x^n)$. The
possible first blocks can be classified as follows:\\

\noindent The first blocks of length $(q+1)n$:
        \squaresize = 10pt \thickness = 1pt \Thickness = 0pt
        \[\Young{ r  & rl &rl & rl & lr & rl & rl & rl
         & rl &rl & rl & rl &rl  &rl  &l &\bigoplus & \bigoplus & \bigoplus\cr
        r&lr & lr &  rl& lr &l & r&lr &lr &lr & lr &  lr &r&lr &lr &lr & lr &  l    \cr}\]
The first blocks of length $(q+k)n$ $(k\geq 2)$:
        \[\Young{  r  & rl &rl & rl & lr & rl & rl & rl
         & rl &rl & rl & rl &rl &rl &l &\bigoplus &  \bigoplus  & rud  & lrud \cdots &  lud &
          r&l& \bigotimes& \bigotimes & \bigotimes & \bigotimes & r&l& \bigotimes& \bigotimes& \bigotimes & \bigotimes & \bigotimes \cr
        r&lr & lr &  rl& lr &l & r&lr &lr &lr & lr &  lr &r&lr &lr &lr & lr &  l & udr& udrl \cdots&udl
        & r & lr & lr &lr & lr  & l & lr & lr & lr & lr &lr & l
        \cr}\]
where $\Young{\bigoplus &\bigoplus \cr}$, $\Young{\bigoplus
    & \bigoplus & \bigoplus   \cr}$, $\Young{\bigotimes &\bigotimes &  \bigotimes &
    \bigotimes  \cr}$, $\Young{\bigotimes &\bigotimes &  \bigotimes &
    \bigotimes&\bigotimes\cr}$ represent any strips of length $n-r-1$, $n-r$, $n-2$ and
    $n-1$ respectively tiled with $1\times1$ square bricks and $1\times2$
    rectangular bricks.

So the generating function $R_{m,n}(x)$ where $m=qn+r$ and $0< r
<n $ for the weighted first blocks is

\begin{align}
    R_{m,n}(x) &= f_{n-r}(a)x^{(q+1)n}+\sum_{k= 2}^{\infty}
    f_{n-1-r}(a)f_{n-1}(a)f_{n-2}(a)^{k-2} x^{(q+k)n} \nonumber\\
    &=
    f_{n-r}(a)x^{(q+1)n} + \frac{f_{n-1-r}(a)f_{n-1}(a)x^{(q+2)n}}{1-f_{n-2}(a)x^n}   \nonumber \\
    &= \frac{f_{n-r}(a)x^{(q+1)n}+  \Big(
    f_{n-1}(a)f_{n-r-1}(a)-f_{n-2}(a)f_{n-r}(a)\Big)x^{(q+2)n}}{1-f_{n-2}(a)x^n
    }   \nonumber \\
    &=
    \frac{f_{n-r}(a)x^{(q+1)n}+(-1)^{\min(n-r-1)}f_{|r-1|-1}(a)x^{(q+2)n}}{1-f_{n-2}(a)x^n} \nonumber
\end{align}
where we use the identity
$f_{n-1}(a)f_{n-r-1}(a)-f_{n-2}(a)f_{n-r}(a)=(-1)^{\min(n-r-1,n-1)}f_{|r-1|-1}(a)$
obtained by substituting $n-2$ for $m$
        and $n-r-1$ for $n$ in the identity (\ref{eq:28})
        of Lemma \ref{eq:51}. Since any tiling can be factored
uniquely as a first block followed by a sequence of prime blocks,
we have that $x^m/(1-ax-x^2) \ast 1/(1-x^n) =
R_{m,n}(x)/(1-P_n(x))$ where $b=0$. This is equivalent to the
desired formula.

When $m=qn$, we have only one
first block of length $m$:\\
        \squaresize = 10pt \thickness = 1pt \Thickness = 0pt
        \[\Young{ r  & rl &rl & rl & lr & rl & rl & rl
         & rl &rl & rl & rl &rl  &rl  &rl &rl &rl &l   \cr
        r&lr & lr &  rl& lr &l & r&lr &lr &lr & lr &  lr &r&lr &lr &lr & lr &  l    \cr}\]
So the generating function $R_{m,n}(x)$ for the weighted first
blocks is $R_{m,n}(x) = x^m $. Therefore we have that
$x^m/(1-ax-x^2) * 1/(1-x^n)=x^m/(1-P_n(x))$ where $b=0$. This is
equivalent to the desired formula
\end{proof}
Note that Theorem \ref{eq:64} also holds for $n=1$.



\noindent \emph{Keywords: } Hadamard product, Fibonacci polynomials.







\end{document}